\newcommand{\bx}{\boldsymbol{x}}
\newcommand{\bh}{\boldsymbol{h}}
\newcommand{\bW}{\boldsymbol{W}}
\newcommand{\bg}{\boldsymbol{g}}
\newcommand{\subW}{subW} 
\newcommand{\edr}{\mathrm{e}}
\newtheorem{theorem}{Theorem}[section]
\newtheorem{proposition}{Proposition}[section]
\newtheorem{corollary}{Corollary}[section]
\newtheorem{definition}{Definition}[section]
\theoremstyle{remark}
\begin{document}

\title{Sub-Weibull distributions: \\
generalizing sub-Gaussian and sub-Exponential properties to heavier-tailed distributions\footnote{
M. Vladimirova and J. Arbel are funded by Grenoble Alpes Data Institute, supported by the French National Research Agency under the  ``Investissements d'avenir'' program (ANR-15-IDEX-02). 
H. Nguyen is funded by the Australian Research Council grants: DE170101134 and DP180101192. 
\par Corresponding author: mariia.vladimirova@inria.fr}
}

\author[1]{Mariia Vladimirova}
\author[1]{St\'ephane Girard}
\author[2]{Hien Nguyen}
\author[1]{Julyan Arbel}
\affil[1]{Univ. Grenoble Alpes, Inria, CNRS, LJK, 38000 Grenoble, France}
\affil[2]{Department of Mathematics and Statistics, La Trobe University, Victoria 3086, Australia}

\date{}

\maketitle

\begin{abstract}
We propose the notion of \textit{sub-Weibull} distributions, which are characterised by tails lighter  than (or equally light as) the right tail of a  Weibull distribution. This novel class generalises the sub-Gaussian and sub-Exponential families to potentially heavier-tailed distributions. Sub-Weibull distributions are parameterized by a positive tail index $\theta$ and reduce to sub-Gaussian distributions for $\theta=1/2$ and to sub-Exponential distributions for $\theta=1$. A characterisation of the sub-Weibull property based on moments and on the moment generating function is provided and properties of the class are studied. An estimation procedure for the tail parameter is proposed and is applied to an example stemming from Bayesian deep learning.
\end{abstract}

\section{Introduction and definition}

Sub-Gaussian distributions, respectively sub-Exponential, are characterized by their tails being upper bounded by Gaussian, respectively Exponential, tails. 
More precisely, we say that a random variable $X$ is sub-Gaussian, resp. sub-Exponential, if there exist positive constants $a$ and $b$ such that 
\begin{align}\label{eq:sub-G-and-E}
	\mathbb{P}(|X| \ge x) \le a \exp(-bx^2),\,\,\,\text{resp.}\,\,\,
	\mathbb{P}(|X| \ge x) \le a \exp(-bx),\,\,\, \text{for all}\,x>0.
\end{align}
These properties have been intensely studied in the recent years due to their relationship with  various fields of probability and statistics, including concentration, transportation and PAC-Bayes  inequalities~\citep{boucheron2013concentration,raginsky2013concentration,van2014probability,catoni2007pac}, the missing mass problem \citep{ben2017concentration}, bandit problems \citep{bubeck2012regret} and singular values of random matrices \citep{rudelson2010non}. 

It is tempting to generalise~\eqref{eq:sub-G-and-E} by considering the class of distributions satisfying
\begin{align}\label{eq:sub-W-tail-def}
	\mathbb{P}(|X| \ge x) \le a \exp\left(-bx^{1/\theta}\right),\,\,\, \text{for all }x>0,\,\,\, \text{for some } \theta,a,b>0.
\end{align}
This is the goal of the present note. Since a \textit{Weibull} random variable $X$ on $\mathbb{R}_+$ is defined by a survival function, for $x>0$, 
\begin{align}\label{eq:Weibull-survival}
	\bar{F}(x) = \mathbb{P}(X \ge x) = \exp\left(-bx^{1/\theta}\right),\,\,\, \text{for some}\,\,\,b>0,\theta>0,
\end{align}
we term a distribution satisfying~\eqref{eq:sub-W-tail-def} a \textit{sub-Weibull} distribution \citep[see][for a detailed account on the Weibull distribution]{rinne2008weibull}.
\begin{definition}[Sub-Weibull random variable]
\label{def:subweibull}
   A random variable $X$, satisfying~\eqref{eq:sub-W-tail-def} for some positive $a$, $b$ and $\theta$, is called a sub-Weibull random variable with tail parameter $\theta$, which is denoted by $X \sim \subW(\theta)$. 
\end{definition}

Interest in such heavier-tailed distributions than Gaussian or Exponential arises in our experience from their emergence in the field of Bayesian deep learning  \citep{vladimirova2018bayesian}. While writing this note, we were made aware of the preprint: \cite{kuchibhotla2018moving}, which, independent of our work, also introduces sub-Weibull distributions but from a different perspective. The definition proposed by \cite{kuchibhotla2018moving} is based on Orlicz norm  \citep[building upon][]{wellner2017bennett} and is equivalent to Definition~\ref{def:subweibull}. While \cite{kuchibhotla2018moving} focus on establishing tail bounds and rates of convergence for problems in high dimensional statistics, including covariance estimation and linear regression, under the sole sub-Weibull assumption, we are focused on proving sub-Weibull characterization properties. In addition, we illustrate their link with deep neural networks, not in the form of a model assumption as in \cite{kuchibhotla2018moving}, but as a characterisation of the prior distribution of deep neural networks units. We further note that \cite{Hao:2019aa} has also applied the notion of sub-Weibull distributions to the problem of constructing confidence bounds for bootstrapped estimators and cite \cite{kuchibhotla2018moving} and \cite{vladimirova2018bayesian} as sources.

The outline of the paper is as follows. We define sub-Weibull distributions, and prove characteristic properties in Section~\ref{sec:sub-weibull}, while some concentration properties are presented in Section~\ref{sec:concentration-prop}. Finally,  Section~\ref{sec:experiments} provides an example of sub-Weibull distributions arising from Bayesian deep neural networks, as well as an estimation procedure for the tail parameter.

\section{Sub-Weibull distributions: characteristic properties}
\label{sec:sub-weibull}

Let $X$ be a random variable. When the $k$-th moment of $X$ exists, $k \ge 1$, we denote $\| X \|_k = \left( \mathbb{E}[ |X|^k]\right)^{1/k}$. The following theorem states different equivalent distribution properties, such as tail decay, growth of moments, and inequalities involving the moment generating function (MGF) of $|X|^{1/\theta}$. The proof of this result shows how to transform one type of information about the random variable into another. Our characterizations are inspired by the characterizations of sub-Gaussian and sub-Exponential distributions in  \cite{vershynin2018high}.

\begin{theorem}[Sub-Weibull equivalent properties]
  Let $X$ be a random variable. Then the following properties are equivalent: 
  \begin{enumerate}
    \item\label{item:1} The tails of $X$ satisfy 
    \begin{align*}
      \exists K_1>0 \quad\text{such that}\quad
      \mathbb{P}(|X| \ge x) \le 2\exp\left( - (x/ K_1)^{1/\theta} \right) \quad \text{for all } x \ge 0.
    \end{align*}
    \item\label{item:2} The moments of $X$ satisfy 
     \begin{align*}
      \exists K_2>0 \quad\text{such that}\quad
      \| X \|_k  \le K_2 k^{\theta} \quad \text{for all } k \ge 1. 
    \end{align*}
    \item\label{item:3} The MGF of $|X|^{1/\theta}$ satisfies
     \begin{align*}
      \exists K_3>0 \quad\text{such that}\quad
      \mathbb{E} \left[ \exp \left( (\lambda |X|)^{1/\theta} \right) \right] \le \exp\left((\lambda K_3)^{1/\theta} \right)
    \end{align*}
    for all $\lambda$ such that  $0 < \lambda \le  1/K_3$.
    
    \item\label{item:4} The MGF of $|X|^{1/\theta}$ is bounded at some point, namely
     \begin{align*}
      \exists K_4>0 \quad\text{such that}\quad
      \mathbb{E} \left[ \exp \left( (|X| / K_4)^{1/\theta} \right)\right] \le 2.
    \end{align*}
  \end{enumerate}
\label{th:subWeibull}
\end{theorem}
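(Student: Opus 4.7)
My plan is to establish the equivalences cyclically, $(\ref{item:1}) \Rightarrow (\ref{item:2}) \Rightarrow (\ref{item:3}) \Rightarrow (\ref{item:4}) \Rightarrow (\ref{item:1})$, following the template used by \cite{vershynin2018high} for the sub-Gaussian and sub-Exponential cases, but with the exponent $1/\theta$ tracked carefully throughout.

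For $(\ref{item:1}) \Rightarrow (\ref{item:2})$, I would use the layer-cake identity $\mathbb{E}[|X|^k] = \int_0^\infty k u^{k-1} \mathbb{P}(|X| \ge u)\, \ddr u$, substitute the tail bound from (\ref{item:1}), and make the change of variable $v = (u/K_1)^{1/\theta}$. The integral collapses to a Gamma function, yielding $\mathbb{E}[|X|^k] \le 2 K_1^k \Gamma(\theta k + 1)$. Taking $k$-th roots and invoking the Stirling estimate $\Gamma(\theta k + 1)^{1/k} \lesssim k^\theta$ (up to a constant depending only on $\theta$) delivers the desired bound $\|X\|_k \le K_2 k^\theta$.

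For $(\ref{item:2}) \Rightarrow (\ref{item:3})$, I would expand the MGF as the power series $\mathbb{E}[\exp((\lambda|X|)^{1/\theta})] = \sum_{k\ge 0} \lambda^{k/\theta} \mathbb{E}[|X|^{k/\theta}]/k!$. For $k \ge \theta$, property (\ref{item:2}) gives $\mathbb{E}[|X|^{k/\theta}] \le K_2^{k/\theta}(k/\theta)^k$; for $k < \theta$, Jensen's inequality coupled with $\|X\|_1 \le K_2$ yields $\mathbb{E}[|X|^{k/\theta}] \le K_2^{k/\theta}$. Combining these with Stirling's bound $k! \ge (k/e)^k$, the series is dominated by a geometric series with ratio proportional to $\lambda^{1/\theta}$, convergent for $\lambda \le 1/K_3$ with $K_3$ chosen so that the ratio is at most $1/2$. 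Closing with $1/(1-x) \le \exp(2x)$ for $x \in [0,1/2]$ converts the geometric bound into the target exponential envelope $\exp((\lambda K_3)^{1/\theta})$, after possibly enlarging $K_3$. The remaining implication $(\ref{item:3}) \Rightarrow (\ref{item:4})$ is a direct specialisation: choosing $\lambda = (\ln 2)^\theta / K_3$ gives $\mathbb{E}[\exp((|X|/K_4)^{1/\theta})] \le 2$ with $K_4 = K_3/(\ln 2)^\theta$. Finally $(\ref{item:4}) \Rightarrow (\ref{item:1})$ is Markov's inequality applied to the nonnegative random variable $\exp((|X|/K_4)^{1/\theta})$, producing the tail bound with $K_1 = K_4$.

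The main obstacle I anticipate is the $(\ref{item:2}) \Rightarrow (\ref{item:3})$ step. Two subtleties must be handled: first, property (\ref{item:2}) only controls moments of order $k \ge 1$, so the terms of the MGF expansion corresponding to $k/\theta < 1$ (relevant when $\theta > 1$) must be treated separately via monotonicity of $L^p$ norms; second, the exponent matching must be done so that the geometric majorant translates into an exponential of $\lambda^{1/\theta}$, not merely of $\lambda$, which is what makes this the correct sub-Weibull generalisation rather than a cruder bound. The explicit dependence of $K_3$ on $\theta$ and $K_2$ is slightly intricate but routine to track.
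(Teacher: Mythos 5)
Your plan follows the same cyclic scheme \ref{item:1}$\Rightarrow$\ref{item:2}$\Rightarrow$\ref{item:3}$\Rightarrow$\ref{item:4}$\Rightarrow$\ref{item:1} as the paper, with the same ingredients at each step (layer-cake identity plus Gamma/Stirling, power-series expansion with geometric majorant and $1/(1-x)\le \edr^{2x}$, specialisation of $\lambda$, Markov), and your handling of the low-order terms when $\theta>1$ matches the paper's treatment. The proposal is correct and essentially identical to the paper's proof.
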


An illustration of symmetric  sub-Weibull distributions is represented in Figure~\ref{pic:layers}  for different values of the tail parameter $\theta$. Since only the tail is relevant for the illustration, the sub-Weibull survival functions $S_\theta$ depicted here are obtained from the survival functions $S^W_\theta$  of Weibull random variables with shape parameter $1/\theta$, scale parameter 1. More specifically, only the part of $S^W_\theta$ to the right of the $95$-th quantile is considered, and it is symmetrized around zero:
\begin{align*}
    S_\theta(x) = 
    10\edr^{-(x+\log(20)^\theta)^{1/\theta}} \quad \text{if } x\geq 0,\quad \text{and} \quad
    S_\theta(x) =    
    1-10\edr^{-(-x+\log(20)^\theta)^{1/\theta}} \quad \text{if } x< 0.
\end{align*}
\begin{figure}
\centering
  \includegraphics[width=.6\textwidth]{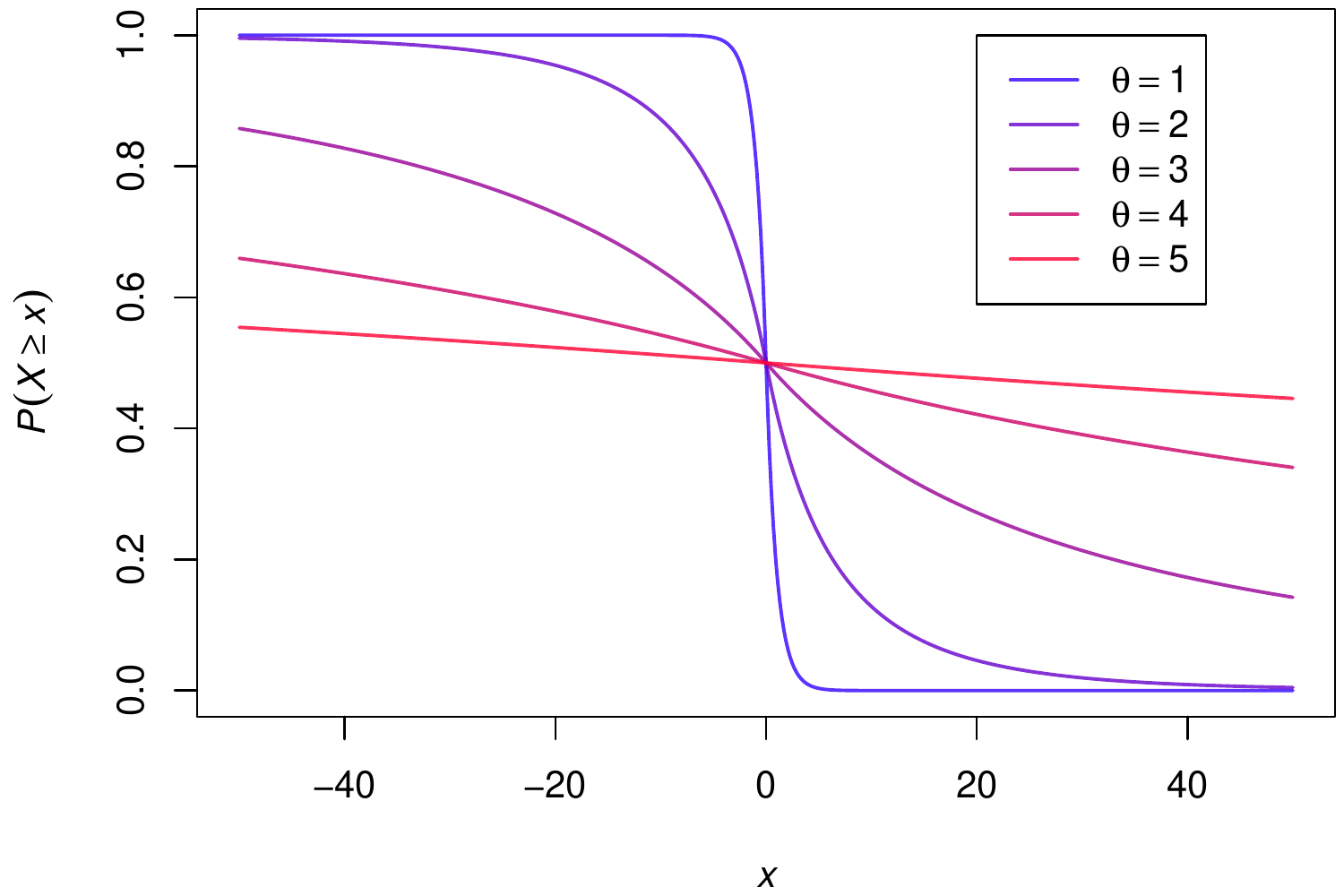}
   \caption{Illustration of sub-Weibull survival curves on  $\mathbb{R}$ with varying tail parameters $\theta$. }
 
   \label{pic:layers} 
\end{figure}
\begin{proof}
In the proof we use the notation $\lesssim$ between two positive sequences $(a_k)_k$ and $(b_k)_k$,  writing $a_k \lesssim b_k$, if there exists a constant $C > 0$ such that for all integer $k$, $a_k \le C b_k$.

  {\bf \ref{item:1} $\Rightarrow$ \ref{item:2}.} Assume Property~\ref{item:1} holds. 
    We have, for any $k\geq 1$,
    \begin{align*}
      \mathbb{E} \left[ |X|^k \right] &\overset{\text{(a)}}{=} \int_0^\infty \mathbb{P} \left(|X|^k > x \right) \mbox{d}x 
      = \int_0^\infty \mathbb{P} \left(|X| > x^{1/k}\right) \mbox{d}x \\
      &
       \overset{\text{(b)}}{\le} \int_0^\infty 2 \exp \left( - (x/K_1) ^{1/(k\theta)}  \right) \mbox{d}x =
      2 K_1 k \theta \int_0^\infty \mbox{e}^{-u} u^{k \theta-1} \mbox{d}u = 2 K_1 k \theta \,\Gamma \left( k \theta  \right)
      = 2 K_1  \,\Gamma \left( k \theta  +1 \right)
      \\
      & 
      \overset{\text{(c)}}{\lesssim} 2 K_1 \left( k \theta + 1 \right)^{k \theta + 1} =  2 K_1 \left( k \theta + 1 \right) \left( k \theta + 1 \right)^{k \theta},
    \end{align*}
    where (a) is by the so-called  integral identity for $|X|^k$, (b) is by Property~\ref{item:1}, and (c) comes from Stirling's formula, yielding $\Gamma(u) \lesssim u^u$.   
Taking the $k$-th root of the expression above yields
    \begin{align*}
      \|X\|_k \lesssim (2 K_1)^{1/k} (k \theta + 1)^{1 / k} \left( k \theta + 1 \right)^{\theta} 
      \lesssim  k^\theta,
    \end{align*}
which is to Property~\ref{item:2}.

  {\bf \ref{item:2} $\Rightarrow$ \ref{item:3}.} Assume Property~\ref{item:2} holds. Recalling the Taylor series expansion of the exponential function, we obtain
  \begin{align*}
    \mathbb{E} \left[ \exp \left( \lambda^{1/\theta} |X|^{1/\theta} \right) \right] &=  \mathbb{E} \left[ 1 + \sum_{k=1}^\infty \frac{(\lambda^{1/ \theta} |X|^{1/\theta})^k}{k!}\right] 
    = 1 + \sum_{k=1}^\infty \frac{\lambda^{k/\theta} \mathbb{E}[|X|^{k/\theta}]}{k!}.
  \end{align*}
  Property~\ref{item:2} guarantees that  $\mathbb{E}[|X|^{k/\theta}] \le K_2^{k/\theta} (k/\theta)^{k}$ for some  $K_2$
  and for any $k \ge \theta$. There exist only a finite number of integers $k$ that are less than $\theta$, thus there exists a constant $\tilde K_2$ such that $\mathbb{E}[|X|^{k/\theta}] \le \tilde K_2^{k/\theta} (k/\theta)^{k}$ for integers  $k \ge \theta$. 
Defining $K_2'=\max\{K_2,\tilde K_2\}$
 implies $\mathbb{E}[|X|^{k/\theta}] \le K_2'^{k/\theta} (k/\theta)^{k}$ for all integers $k \ge 1$.
 By Stirling's approximation we have $k! \gtrsim (k/\mbox{e})^k$. Substituting these two bounds, we get 
  \begin{align*}
    \mathbb{E} \left[ \exp \left( \lambda^{1/\theta} |X|^{1/\theta} \right) \right] 
    \lesssim 1 + \sum_{k=1}^\infty \frac{\lambda^{k/\theta} K_2'^{k/\theta} (k/\theta)^k}{(k / \mbox{e})^k} 
    = \sum_{k=0}^\infty  K_2'^{k/\theta} (\mbox{e} \lambda^{1/\theta}/\theta)^k 
    = 
 \frac 1 {1 - \mbox{e} (K_2' \lambda)^{1/\theta}/\theta}
  \end{align*}
  provided that  $\mbox{e} (K_2' \lambda)^{1/\theta}/\theta < 1$, in which case the geometric series above converges. To bound this quantity further, we can use the  inequality $\frac 1 {1 - x} \le \mbox{e}^{2x}$, which is valid for $x \in [0, 1/2]$. It follows that
     \begin{align*}
    \mathbb{E} \left[ \exp \left( \lambda^{1/\theta} X^{1/\theta} \right) \right] 
    \le \exp \left( 2\mbox{e} (K_2' \lambda)^{1/\theta}/\theta \right) 
   \end{align*}
  for all $\lambda$ satisfying $0 < \lambda \le \frac 1 {K_2'} \left(\frac {\theta} {2 \mbox{e}} \right)^{\theta}$ and some positive $K_2'$ defined above. This yields Property~\ref{item:3} with $K_3 = K_2' (2 \mbox{e} /\theta)^{\theta}$.

  {\bf \ref{item:3} $\Rightarrow$ \ref{item:4}.} Assume Property~\ref{item:3} holds. Take $\lambda = 1 / K_4$, where $K_4 \ge K_3 / (\ln 2)^{\theta}$. This yields Property~\ref{item:4}.

  {\bf \ref{item:4} $\Rightarrow$ \ref{item:1}.} Assume Property~\ref{item:4} holds. Then, by Markov's~inequality and Property~\ref{item:3}, we obtain 
  \begin{align*}
    \mathbb{P} \bigl( |X| > x\bigr) = \mathbb{P} \left( \mbox{e}^{(|X|/K_4)^{1/\theta}} > \mbox{e}^{(x/K_4)^{1/\theta}} \right) 
    \le \frac{\mathbb{E} [ \mbox{e}^{(|X|/K_4)^{1/\theta}}]}{\mbox{e}^{(x/K_4)^{1/\theta}}} \le 2 \mbox{e}^{- (x/K_4)^{1/\theta}}.
  \end{align*}
  This proves Property~\ref{item:1} with $K_1 = K_4$. 
\end{proof}
Informally, the tails of a $\subW(\theta)$ distribution are dominated by (i.e. decay at least as fast as) the tails of a Weibull variable with shape parameter\footnote{Weibull distributions are commonly parameterized by a shape parameter $\kappa$. Here we use instead $\theta=1/\kappa$ for the convenience that the larger the tail parameter $\theta$, the heavier the tails of the sub-Weibull distribution.} equal to  $1/\theta$. 
Sub-Gaussian and sub-Exponential variables, which are commonly used, are special cases of sub-Weibull random variables with tail parameter $\theta = 1/2$ and $\theta = 1$, respectively, see Table~\ref{table:distributions}. 
%
\renewcommand{\arraystretch}{1.3}
\begin{table}[!ht]
\centering
\caption{Sub-Gaussian, sub-Exponential and sub-Weibull distributions comparison in terms of tail $P( |X| \ge x)$ and moment condition, with $K_1$ and $K_2$ some positive constants. The first two are a special case of the last with $\theta=1/2$ and $\theta=1$, respectively.}
\begin{tabular}{@{}lll@{}}
\toprule
Distribution                         & Tails          & Moments          \\ \toprule
Sub-Gaussian                 & $\mathbb{P}(|X| \ge x) \le 2\edr^{ - (x / K_1)^2}$                   & $\|X\|_k \le K_2 \sqrt{k}$ \\ \hline
Sub-Exponential              & $\mathbb{P}(|X| \ge x) \le 2\edr^{ - x / K_1 }$                  & $\|X\|_k \le K_2 k$ \\  \hline
Sub-Weibull       & $\mathbb{P}(|X| \ge x) \le 2\edr^{ - (x / K_1)^{1/\theta}}$             & $\|X\|_k \le K_2 k^{\theta}$ \\ \bottomrule
\end{tabular}

\label{table:distributions}
\end{table}
Sub-Gaussian distributions are sub-Exponential as well. Such inclusion properties are generalized to sub-Weibull distributions in the following proposition.  
\begin{proposition}[Inclusion]
\label{prop:inclusion}
Let $\theta_1$ and $\theta_2$ such that $0 < \theta_1 \leq \theta_2$ be two sub-Weibull tail parameters. The following inclusion holds 
\begin{align*}
	\subW(\theta_1) \subset \subW(\theta_2). 
\end{align*}
\end{proposition}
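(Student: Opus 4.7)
The cleanest route is to go through the moment characterization (Property 2 of Theorem \ref{th:subWeibull}), which sidesteps range-of-$x$ issues that would appear if I tried to compare the raw tail bounds directly.

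The plan is as follows. Suppose $X \in \subW(\theta_1)$. By the equivalence \ref{item:1} $\Leftrightarrow$ \ref{item:2} in Theorem \ref{th:subWeibull}, there exists a constant $K_2 > 0$ such that $\|X\|_k \le K_2\, k^{\theta_1}$ for every $k \ge 1$. Since $k \ge 1$ and $\theta_1 \le \theta_2$, the elementary inequality $k^{\theta_1} \le k^{\theta_2}$ holds for all such $k$. Composing these bounds gives $\|X\|_k \le K_2\, k^{\theta_2}$ for every $k \ge 1$, which is precisely Property \ref{item:2} with tail parameter $\theta_2$. Invoking the equivalence once more yields $X \in \subW(\theta_2)$.

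There is essentially no obstacle here: the whole content is the monotonicity $k \mapsto k^{\theta}$ in $\theta$ on $[1,\infty)$, and the real work has already been done in establishing Theorem \ref{th:subWeibull}. I would avoid trying to prove the inclusion directly from the tail definition \eqref{eq:sub-W-tail-def}, because although one has $(x/K_1)^{1/\theta_1} \ge (x/K_1)^{1/\theta_2}$ when $x \ge K_1$, the reverse holds for $x < K_1$, which would force a slightly awkward constant adjustment (absorbing a bounded factor into $a$ or $b$). Routing through moments is cleaner.

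A brief closing remark would note that the inclusion is strict in general: a Weibull random variable with shape parameter $1/\theta_1$ belongs to $\subW(\theta_1)$ but its $k$-th moments grow like $\Gamma(k\theta_1 + 1)^{1/k} \asymp k^{\theta_1}$, so any $\theta' < \theta_1$ would violate Property \ref{item:2} and thus the chain $\subW(\theta_1) \subset \subW(\theta_2)$ cannot be reversed when $\theta_1 < \theta_2$.
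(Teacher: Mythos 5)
Your proof is correct and follows exactly the same route as the paper: pass to the moment characterization (Property~\ref{item:2} of Theorem~\ref{th:subWeibull}), use $k^{\theta_1} \le k^{\theta_2}$ for $k \ge 1$, and invoke the equivalence again. The additional remarks on avoiding the raw tail bound and on strictness of the inclusion are sensible but not needed.
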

\begin{proof}
	For $X \sim \subW(\theta_1)$, there exists some constant $K_2 > 0$ such that for all $k > 0$, $\|X\|_k \le K_2 k^{\theta_1}$. Since $k^{\theta_1} \le k^{\theta_2}$ for all $k \ge 1$, this yields $\| X\|_k \le K_2 k^{\theta_2}$, which by definition implies $X \sim \subW(\theta_2)$. 	
\end{proof}
Let a random variable $X$ follow a sub-Weibull distribution with tail parameter $\theta$. 
Due to the property of inclusion from Proposition~\ref{prop:inclusion}, the sub-Weibull definition provides an upper bound for the tail. 
In order to address the question of a lower bound on the tail parameter of some sub-Weibull distribution, we rely on an optimal tail parameter for that distribution through the use of the moment Property~\ref{item:2} of Theorem~\ref{th:subWeibull}.
To this aim, we introduce the notation of asymptotic equivalence between two positive sequences $(a_k)_k$ and $(b_k)_k$ as by $a_k \asymp b_k$, defined by the existence of constants $D \ge d > 0$ such that 
\begin{align}
  \label{asymptotic_equivalence}
   d \le \frac{a_k}{b_k} \le D, \quad \text{for all } k \in \mathbb{N}. 
\end{align}
We can now introduce the notion of optimal sub-Weibull tail coefficient, and provide a moment-based condition for optimality to hold.
\begin{proposition}[Optimal sub-Weibull tail coefficient and moment condition]\label{prop:optimal_moment_condition}
Let $\theta>0$ and let $X$ be a random variable satisfying the following asymptotic equivalence on moments
\begin{align*}
\|X\|_k \asymp k^\theta.
\end{align*}
Then $X$ is sub-Weibull distributed with optimal tail parameter $\theta$, in the sense that for any $\theta^\prime<\theta$, $X$ is not $\subW (\theta^\prime)$.
\end{proposition}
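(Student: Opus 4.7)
The plan is to use the asymptotic equivalence $\|X\|_k \asymp k^\theta$, which unpacks as the two-sided bound $d k^\theta \le \|X\|_k \le D k^\theta$ for some constants $D \ge d > 0$ and all $k \in \mathbb{N}$, and split the statement into two parts: first show $X \sim \subW(\theta)$, then show the lower bound $\|X\|_k \ge d k^\theta$ rules out membership in any $\subW(\theta')$ with $\theta' < \theta$.

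For the first part, the upper bound $\|X\|_k \le D k^\theta$ is literally Property~\ref{item:2} of Theorem~\ref{th:subWeibull} with $K_2 = D$. By the equivalence \ref{item:2} $\Leftrightarrow$ \ref{item:1} established in that theorem, we conclude $X \sim \subW(\theta)$.

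For the optimality part, I would argue by contradiction. Suppose that for some $\theta' < \theta$ we had $X \sim \subW(\theta')$. Invoking Property~\ref{item:2} in the equivalent characterisation, there would exist a constant $K_2' > 0$ such that
\begin{align*}
\|X\|_k \le K_2' k^{\theta'} \quad \text{for all } k \ge 1.
\end{align*}
Combining this with the lower bound from the hypothesised asymptotic equivalence gives $d k^\theta \le K_2' k^{\theta'}$, i.e.\ $d \le K_2' k^{\theta' - \theta}$ for all integers $k \ge 1$. Since $\theta' - \theta < 0$, the right-hand side tends to $0$ as $k \to \infty$, contradicting the strict positivity $d > 0$. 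Hence no such $\theta'$ can exist, which is precisely the claim of optimality.

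There is no real obstacle here: the content is entirely that Theorem~\ref{th:subWeibull} turns the sub-Weibull property into a clean moment condition, so both the upper bound and the matching lower bound on $\|X\|_k$ translate directly into statements about the admissible tail parameters. The only thing to be careful about is using the lower half of the $\asymp$ relation (easy to overlook), since only that half is needed for optimality, while only the upper half is needed for establishing the sub-Weibull property itself.
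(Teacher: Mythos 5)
Your proof is correct and follows essentially the same route as the paper: use the upper half of the $\asymp$ bound to get Property~\ref{item:2} of Theorem~\ref{th:subWeibull} (hence $X\sim\subW(\theta)$), and the lower half to rule out any moment bound of the form $K_2' k^{\theta'}$ with $\theta'<\theta$. The only difference is that you spell out the limiting argument $d\le K_2' k^{\theta'-\theta}\to 0$ explicitly, which the paper leaves implicit.
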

%
\begin{proof}
  By the upper bound of the asymptotic equivalence assumption on moments, $X$ satisfies Property~\ref{item:2} of Theorem~\ref{th:subWeibull}, so $X \sim \subW(\theta)$. Let $\theta^\prime<\theta$. By the lower bound of the asymptotic equivalence assumption on moments, there does not exist any constant $K_2$ such that $\|X\|_k \leq K_2 k^{\theta'}$ for any $k\in\mathbb{R}$, so $X$ is not sub-Weibull with tail parameter $\theta^\prime$. This concludes the proof.
\end{proof}
When sub-Gaussian and sub-Exponential properties are studied, it is often assumed that the distribution is centered. However, note that these properties, as well as the sub-Weibull property, are tail properties, and as such, they are not affected by translation. Non-centered random variables can always be centered by subtracting the mean without affecting their tail property. In particular, variable centering does not change the optimal tail parameter of a sub-Weibull distribution.  Scaling procedure does not influence the tail neither: multiplication by some fixed number will change the coefficient $K_1$ in the tail Property~\ref{item:1} of Theorem~\ref{th:subWeibull},  but not the tail parameter.

It is known that the product of sub-Gaussian random variables is sub-exponential (cf. \citealt{vershynin2018high}, Lemma 2.7.7). By the same virtue, we have the fact that the class of sub-Weibull random variables is closed under multiplication and addition.
\begin{proposition}[Closure under multiplication and addition]
\label{proposition:closure}
Let $X$ and $Y$ be sub-Weibull random variables with tail parameters
$\theta_{1}$ and $\theta_{2}$, respectively. Then, $XY$ and $X+Y$ are sub-Weibull
with respective tail parameters $\theta_{1}+\theta_{2}$
and $\max(\theta_{1},\theta_{2})$.
\end{proposition}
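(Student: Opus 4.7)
The plan is to work entirely with the moment characterization (Property~\ref{item:2} of Theorem~\ref{th:subWeibull}), which is the most convenient representation for handling sums and products because of the standard norm-like inequalities it supports. By that theorem, it suffices to show that $\|X+Y\|_k \lesssim k^{\max(\theta_1,\theta_2)}$ and $\|XY\|_k \lesssim k^{\theta_1+\theta_2}$ for all $k\ge 1$.

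For the sum, the natural tool is Minkowski's inequality, giving $\|X+Y\|_k \le \|X\|_k + \|Y\|_k$. By hypothesis there are constants $K,K'>0$ with $\|X\|_k\le K k^{\theta_1}$ and $\|Y\|_k\le K' k^{\theta_2}$. Since $k\ge 1$ and $\theta_1,\theta_2\le \max(\theta_1,\theta_2)$, each term is bounded above by a multiple of $k^{\max(\theta_1,\theta_2)}$, yielding the desired bound with constant $K+K'$. Invoking Theorem~\ref{th:subWeibull} then gives $X+Y\sim\subW(\max(\theta_1,\theta_2))$.

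For the product, the natural tool is the Cauchy--Schwarz inequality applied to $|X|^k$ and $|Y|^k$, which gives $\mathbb{E}[|XY|^k]\le (\mathbb{E}[|X|^{2k}])^{1/2}(\mathbb{E}[|Y|^{2k}])^{1/2}$, i.e.\ $\|XY\|_k\le \|X\|_{2k}\|Y\|_{2k}$. Applying the moment bounds at index $2k$ yields
\begin{align*}
\|XY\|_k \le K(2k)^{\theta_1}\cdot K'(2k)^{\theta_2} = KK'\, 2^{\theta_1+\theta_2}\, k^{\theta_1+\theta_2},
\end{align*}
which is exactly the sub-Weibull moment condition with tail parameter $\theta_1+\theta_2$. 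Another appeal to Theorem~\ref{th:subWeibull} concludes.

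There is no real obstacle here; the only subtlety is to use the right inequality in each case and to remember to evaluate $\|X\|_{2k}$ (not $\|X\|_k$) when applying Cauchy--Schwarz for the product, since this is what produces the additive $\theta_1+\theta_2$ exponent, and to note that the constant $2^{\theta_1+\theta_2}$ that arises can be absorbed into the constant $K_2$ of the characterization.
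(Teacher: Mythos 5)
Your proof is correct. For the sum, your argument (Minkowski's inequality applied to the moment characterization, then bounding both $k^{\theta_1}$ and $k^{\theta_2}$ by $k^{\max(\theta_1,\theta_2)}$ for $k\ge 1$) is exactly the paper's proof. For the product, the paper actually gives no argument at all — its proof explicitly treats only the addition case and leaves multiplication to the reader, citing the analogous sub-Gaussian result in \cite{vershynin2018high} — so your Cauchy--Schwarz step $\|XY\|_k \le \|X\|_{2k}\,\|Y\|_{2k} \le KK'\,2^{\theta_1+\theta_2}k^{\theta_1+\theta_2}$ supplies the missing half, and it is valid: the doubling of the moment index is harmless since $2k\ge 1$, and the factor $2^{\theta_1+\theta_2}$ is absorbed into the constant of Property~\ref{item:2}. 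In short, you match the paper where it gives a proof and correctly complete the case it omits.
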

\begin{proof}
Let us focus on the closure under addition property.
Property~\ref{item:2} in Theorem~\ref{th:subWeibull} and the triangular inequality entail that
there exist $K_{2,1}>0$ and $K_{2,2}>0$ such that,
for all $k\geq 1$,
\begin{align*}
\|X+Y\|_k\leq \|X\|_k + \|Y\|_k \leq K_{2,1} k^{\theta_1} 
+ K_{2,2} k^{\theta_2} \leq (K_{2,1}+K_{2,2}) k^{\max(\theta_1,\theta_2)}.
\end{align*}
The conclusion follows from Theorem~\ref{th:subWeibull}.
\end{proof}
Furthermore, we may establish that the class of sub-Weibull random variables is larger than the class of bounded random variables but is smaller than the class of random variables with finite $p$-th moment, for every $p\in[0,\infty)$. This result is comparable to Remark 2.7.14 of \cite{vershynin2018high}, which establishes the same relationship regarding the class of sub-Gaussian random variables.

\section{Concentration properties}\label{sec:concentration-prop}

In this section, we focus on theoretical results about the sum of sub-Weibull random variables, including concentration properties.
\begin{proposition}[Sum of sub-Weibull random variables]
\label{proposition:sum}
Let $X_1,\ldots,X_n$ be sub-Weibull random variables with tail parameter $\theta$. Then the sum  $\sum_{i=1}^{n}X_{i}$ is sub-Weibull with tail parameter $\theta$.
\end{proposition}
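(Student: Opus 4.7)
The plan is to mimic the addition argument in Proposition~\ref{proposition:closure} but apply it to $n$ summands at once, using the moment characterization of sub-Weibull random variables (Property~\ref{item:2} of Theorem~\ref{th:subWeibull}) together with the triangle inequality for the $\|\cdot\|_k$ norm.

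First, I would invoke Theorem~\ref{th:subWeibull} to translate the sub-Weibull hypothesis on each $X_i$ into a moment bound: for each $i\in\{1,\ldots,n\}$ there exists a constant $K_{2,i}>0$ such that $\|X_i\|_k \le K_{2,i}\,k^{\theta}$ for all $k\ge 1$. Second, since $\|\cdot\|_k$ is the $L^k$-norm, it satisfies Minkowski's (triangle) inequality, so
\begin{align*}
\Bigl\|\sum_{i=1}^{n} X_i\Bigr\|_k \;\le\; \sum_{i=1}^{n} \|X_i\|_k \;\le\; \sum_{i=1}^{n} K_{2,i}\, k^{\theta} \;=\; \Bigl(\sum_{i=1}^{n} K_{2,i}\Bigr) k^{\theta}.
\end{align*}
Setting $K_2 = \sum_{i=1}^{n} K_{2,i}$ yields Property~\ref{item:2} of Theorem~\ref{th:subWeibull} for $\sum_{i=1}^{n} X_i$ with the same tail parameter $\theta$. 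Third, I would conclude by re-applying Theorem~\ref{th:subWeibull} in the reverse direction to deduce that $\sum_{i=1}^{n} X_i$ is sub-Weibull with tail parameter $\theta$.

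There is no real obstacle: the proof is essentially a one-line extension of the addition part of Proposition~\ref{proposition:closure} (which handled $n=2$). One could alternatively phrase it as an induction on $n$, using Proposition~\ref{proposition:closure} at the inductive step together with $\max(\theta,\theta)=\theta$, but the direct Minkowski-inequality argument above is both shorter and yields an explicit constant. No independence or centering assumption is needed, consistent with the fact that the sub-Weibull property is purely a tail condition.
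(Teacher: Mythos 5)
Your proposal is correct and matches the paper's own proof essentially verbatim: both use the moment characterization (Property~\ref{item:2} of Theorem~\ref{th:subWeibull}) together with Minkowski's inequality to obtain $\|\sum_{i=1}^n X_i\|_k \le \bigl(\sum_{i=1}^n K_{2,i}\bigr)k^{\theta}$ and then conclude via the equivalence in the theorem. No differences worth noting.
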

\begin{proof}
    Using  Property~\ref{item:2} in Theorem~\ref{th:subWeibull} combined with Minkowski's inequality yields (for any $k\geq 1$):
         \begin{align}
         \label{eq:proof-prop}
      \left\| \sum_{i=1}^{n}X_{i} \right\|_k  
      &\le  \sum_{i=1}^{n} \|X_{i}\|_k  
      \le \sum_{i=1}^{n} K_{2,i} k^{\theta} = K_{2,\Sigma} k^{\theta},
    \end{align}
where $K_{2,\Sigma} = \sum_{i=1}^{n} K_{2,i}$. 
It implies that the sum $X_1 + \cdots + X_n$ satisfies the sub-Weibull property with tail parameter at most $\theta$. In addition, if $X_1, \ldots, X_n$ are from the same distribution, i.e. $\|X_i\|_k \le K_2 k^{\theta}$ for all $i \in \{1, \dots, n\}$ with some constant $K_2 > 0$, then $K_{2,\Sigma} = n K_2$. 
\end{proof}
Using Proposition~\ref{proposition:closure}, and the proof of Property~\ref{item:2} in Theorem~\ref{th:subWeibull},
we have the following Hoeffding-type concentration inequality regarding the
sum $\sum_{i=1}^{n}X_{i}$.
\begin{corollary}[Concentration of the sum]
\label{corollary: sum concentration}
Let that $X_{1},\dots,X_{n}$ be identically distributed sub-Weibull random variables with tail parameter $\theta$. Then, for all $x \ge nK_\theta$, we have
\begin{align*}
\mathbb{P}\left(\left|\sum_{i=1}^{n}X_{i}\right|\ge x\right)\le\exp\left(-\left(\frac{x}{nK_{\theta}}\right)^{1/\theta}\right)\text{,}
\end{align*}
for some constant $K_{\theta}$ dependent on $\theta$.
\end{corollary}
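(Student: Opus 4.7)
The plan is to reduce the sum to a single sub-Weibull variable and then convert the resulting moment bound into a tail bound via Markov's inequality. The first step invokes Proposition~\ref{proposition:sum} (or, iteratively, Proposition~\ref{proposition:closure}) to conclude that $S_n := \sum_{i=1}^n X_i$ is itself sub-Weibull with tail parameter $\theta$. Because the $X_i$ are identically distributed, the Minkowski-based argument used in the proof of Proposition~\ref{proposition:sum} supplies the quantitative bound
\[
\|S_n\|_k \leq n K_2 k^{\theta} \quad\text{for all } k\geq 1,
\]
where $K_2$ is the common Property~\ref{item:2} constant of Theorem~\ref{th:subWeibull} for each $X_i$.

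Next, I would mimic the Markov step used in the implication \ref{item:4}$\,\Rightarrow\,$\ref{item:1} of the proof of Theorem~\ref{th:subWeibull}, but applied directly to an integer moment rather than to the MGF. For any integer $k\geq 1$,
\[
\mathbb{P}(|S_n|\geq x) \;=\; \mathbb{P}(|S_n|^{k}\geq x^{k}) \;\leq\; \frac{\mathbb{E}[|S_n|^{k}]}{x^{k}} \;\leq\; \left(\frac{n K_2 k^{\theta}}{x}\right)^{\!k}.
\]
Treating $k$ as continuous and minimizing the right-hand side selects $k^{\star} = e^{-1}(x/(nK_2))^{1/\theta}$, at which $nK_2(k^{\star})^{\theta}/x = e^{-\theta}$; the bound then collapses to $\exp(-\theta k^{\star}) = \exp\bigl(-(x/(nK_{\theta}))^{1/\theta}\bigr)$ with $K_{\theta}$ of the order $K_2(e/\theta)^{\theta}$.

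The main obstacle, and the only real work, is the bookkeeping that makes the above rigorous: $k$ must be a positive integer, so I would round to $k = \lfloor k^{\star}\rfloor$, which is admissible exactly when $k^{\star}\geq 1$, yielding the threshold condition $x\geq n K_\theta$ once $K_\theta$ is inflated enough to absorb the rounding loss and the boundary case $k^{\star}\in[1,2)$. As an equivalent alternative, one may instead apply Markov to the MGF Property~\ref{item:4} for $S_n$ to obtain a bound of the form $2\exp(-(x/(nK'))^{1/\theta})$ and then absorb the prefactor $2$ into the exponent for $x$ larger than a $\theta$-dependent multiple of $n$; this produces the same conclusion. Either route delivers the corollary with a constant $K_\theta$ depending only on $\theta$ (and on the common sub-Weibull constant $K_2$).
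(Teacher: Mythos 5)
Your proposal is correct and follows essentially the same route as the paper: bound $\left\|\sum_{i=1}^n X_i\right\|_k \le nK_2k^{\theta}$ via Minkowski, apply Markov's inequality to the $k$-th moment, and choose $k$ of order $(x/(nK_2))^{1/\theta}$. The only real difference is that the paper applies Markov with a \emph{real} exponent $k\ge 1$ (the moment bound from Proposition~\ref{proposition:sum} holds for all real $k\ge 1$), choosing $k$ so that the ratio equals $1/e$ exactly; this sidesteps the integer-rounding bookkeeping you flag as the main obstacle.
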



\begin{proof}
For all real $k\geq 1$, the bound~(\ref{eq:proof-prop}) in
   the proof of  Proposition~\ref{proposition:sum} yields
\begin{align*}
   \mathbb{E}\left(\left|\sum_{i=1}^{n}X_{i}\right|^{k}\right)\le\left[K_{2,\Sigma}k^{\theta}\right]^{k} = \left[nK_{2}k^{\theta}\right]^{k},
\end{align*}
 and thus, by Markov's inequality,
\begin{align*}
    \mathbb{P}\left(\left|\sum_{i=1}^{n}X_{i}\right|\ge t\right) =\mathbb{P}\left(\left|\sum_{i=1}^{n}X_{i}\right|^{k}\ge t^{k}\right)
     \le\frac{\mathbb{E}\left|\sum_{i=1}^{n}X_{i}\right|^{k}}{t^{k}}\le\frac{\left(nK_{2}\right)^{k}\left(k^{\theta}\right)^{k}}{t^{k}}\text{.}
\end{align*}
Choose $t$ so that $\exp\left(-k\right)=\left(nK_{2}\right)^{k}\left(k^{\theta}\right)^{k}/t^{k}$.
Then, we have
\begin{align}
\label{eq:proof-coro}
    \mathbb{P}\left(\left|\sum_{i=1}^{n}X_{i}\right|\ge enK_{2}k^{\theta}\right)\le\exp\left(-k\right)\text{.}
\end{align}
Let $K_{\theta}=eK_{2}$ and notice that $
    nK_{\theta}k^{\theta}=x$ implies that  $k=\left(\frac{x}{nK_{\theta}}\right)^{1/\theta} $
and thus, for $x\ge nK_{\theta}$ (since $k\ge1$):
\begin{align*}
    \mathbb{P}\left(\left|\sum_{i=1}^{n}X_{i}\right|\ge x\right)\le\exp\left(-\left(\frac{x}{nK_{\theta}}\right)^{1/\theta}\right)\text{,}
\end{align*}
which is the expected result.
\end{proof}
Let us remark that, alternatively, letting $\exp\left(-k\right)=\alpha$ in~(\ref{eq:proof-coro}) implies, for $0<\alpha<1/e$, the confidence
statement
\begin{align*}
    \mathbb{P}\left(\left|\sum_{i=1}^{n}X_{i}\right|\le nK_{\theta}\left(\log\frac{1}{\alpha}\right)^{\theta}\right)\ge1-\alpha\text{.}
\end{align*}
Under the same conditions as Proposition \ref{corollary: sum concentration},
under the restriction that $\theta\le1$, \cite{boucheron2013concentration} proposed the inequality
\begin{align}
\mathbb{P}\left(\left|\sum_{i=1}^{n}X_{i}\right|\ge x\right)\le K_{\theta}\exp\left(-\frac{1}{K_{\theta}}\min\left\{ \frac{x^{2}}{n},\frac{x^{1/\theta}}{n^{\left(1-\theta\right)/\theta}}\right\} \right)\text{,}\label{eq: Boucheron concentration}
\end{align}
where $K_{\theta}$ is again a constant that only depends on $\theta$.
We observe that $\left(1-\theta\right)/\theta<1/\theta$ and thus the
bound of (\ref{eq: Boucheron concentration}) converges faster to
zero than that of Corollary \ref{corollary: sum concentration} in all cases where they are comparable. A version of this result also appears in Lemma~3.5 of \cite{adamczak2011restricted}. Further results regarding the concentration of the
sum of sub-Weibull random variables are available in \cite{kuchibhotla2018moving} and \cite{Hao:2019aa}. 
See also \cite{bakhshizadeh2020sharp} for further concentration results regarding heavy-tailed distributions.

\section{Application to Bayesian neural networks}
\label{sec:experiments}

This section gives an example of sub-Weibull variables that arise in deep learning, more specifically in the context of Bayesian deep neural networks, as originally described in \cite{vladimirova2018bayesian}. 

\subsection{Distribution at the level of units}

We first describe so-called fully connected neural networks. Neural networks are hierarchical models made of layers: an input, several hidden layers and an output. Each layer following the input layer consists of units which are linear combinations of previous layer units transformed by a nonlinear function, often referred to as the nonlinearity or activation function denoted by $\phi: \mathbb{R} \to \mathbb{R}$. Given an input $\bx \in \mathbb{R}^N$ (for instance an image made of $N$ pixels) the $\ell$-th hidden layer consists of two vectors whose size is called the width of layer, denoted by $H_\ell$. The vector of units before application of the non-linearity is called  pre-nonlinearity, and is denoted by $\bg^{(\ell)}=\bg^{(\ell)}(\bx)$, while the vector obtained after element-wise application of $\phi$  is called post-nonlinearity and is denoted by  $\bh^{(\ell)}=\bh^{(\ell)}(\bx)$. More specifically, these vectors   are defined as
\begin{align}\label{eq:propagation}
      \bg^{(\ell)}(\bx) = \bW^{(\ell)} \bh^{(\ell - 1)} (\bx), \quad \bh^{(\ell)} (\bx) = \phi(\bg^{(\ell)}(\bx)),
\end{align}
where $\bW^{(\ell)}$ is a weight matrix of dimension $H_\ell\times H_{\ell-1}$ including a bias vector, with the convention that $H_0=N$, the input dimension. 

Neural networks perform state-of-the-art results in many areas. Researchers aim at better understanding the driving mechanisms behind their effectiveness. In particular, the study of the neural networks distributional properties through Bayesian analysis, where weights are assumed to follow a prior distribution, has attracted a lot of attention in the recent years. The main focus of research in the field shows that a \textit{Bayesian deep neural network converges in distribution to a Gaussian process when the width of all the layers goes to infinity}. See for instance \cite{matthewsgaussian,lee2018deep} for the main proofs. Further research such as \cite{hayou2019impact} builds upon the limiting Gaussian process property of neural networks, as well as on the notion of \textit{edge of chaos} developed by \cite{schoenholz2016deep}, in order to devise novel architecture rules for neural networks.

\begin{figure}
  \centering
  \includegraphics[width=.5\textwidth]{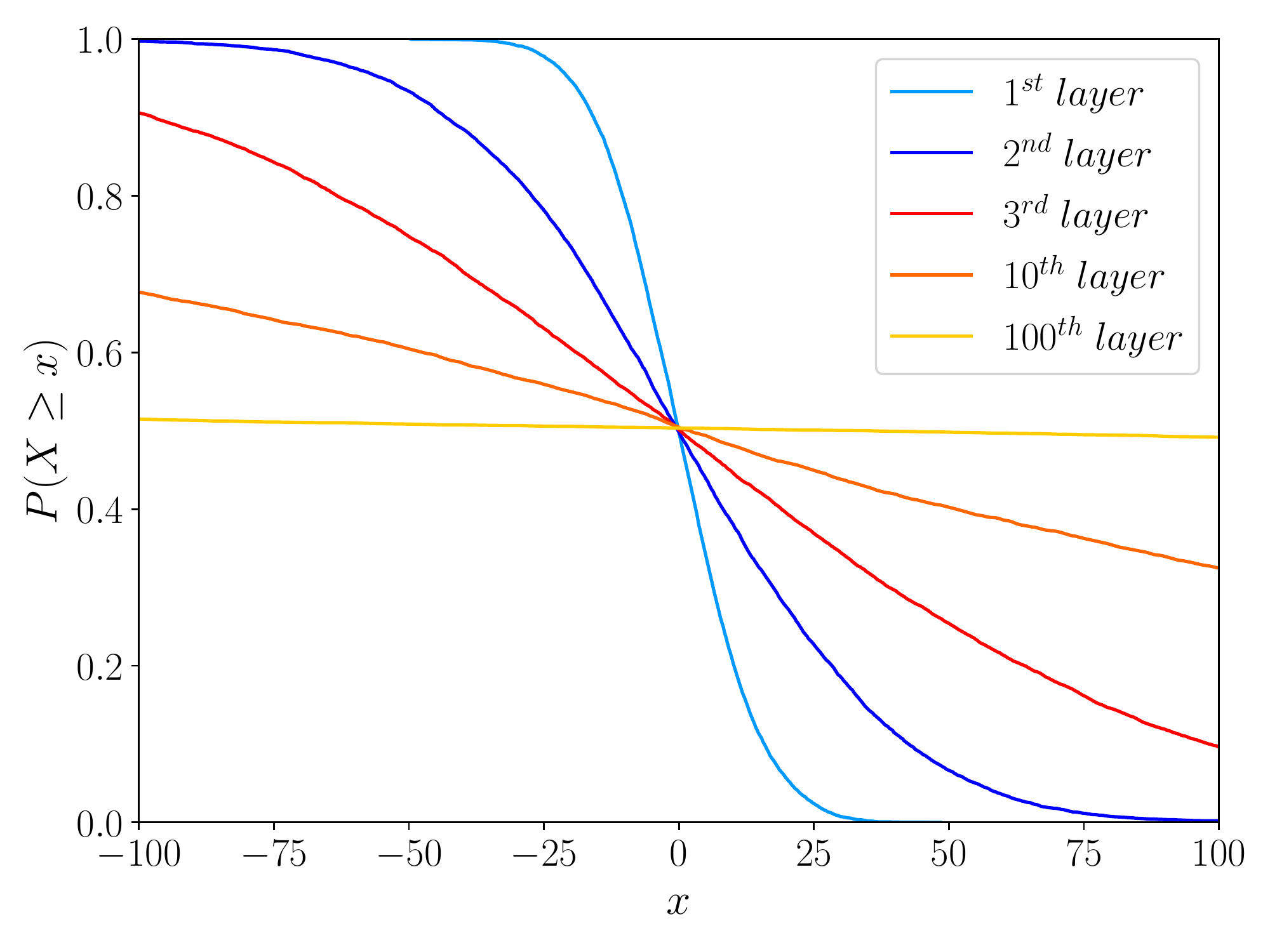}
  \caption{Illustration of layers $\ell=1,2,3,10$ and $100$ hidden units marginal prior distributions tails. According to Theorem 2.1 of \citet{vladimirova2018bayesian}, they correspond respectively to $\subW(\sfrac12)$,  $\subW(1)$,  $\subW(\sfrac32)$,  $\subW(5)$ and  $\subW(50)$.}
  \label{pic:nn_layers}
\end{figure}

In contrast, Theorem 2.1 of  \cite{vladimirova2018bayesian} shows that the \textit{non asymptotic} (i.e. for finite width neural networks) prior distribution of units from the $\ell$-th layer (both before and after activation, $\bg^{(\ell)}$ and~$\bh^{(\ell)}$) induced by a standard Gaussian prior on the weights $\bW^{(\ell)}$, is \textit{sub-Weibull with tail  parameter $\ell/2$}. Therefore, the deeper the layer is, the heavier-tailed is the units distribution. This result puts into perspective the infinite width Gaussian process property which might be far from holding for real world neural networks.

In order to illustrate the sub-Weibull property of units prior distributions, we performed the following experiment with a deep neural network of 100 layers. 
We considered a Bayesian neural network with independent standard Gaussian priors on the weights, where layers are composed of
$H_\ell = 1000 - 10(\ell-1)$ hidden units, $\ell=1,\ldots,100$, with the ReLU nonlinearity $\phi$ defined by $\phi(x) = \max(0,x)$ (see \eqref{eq:propagation}). The input vector $\bx$ contains $10^4$ numbers sampled from independent standard Gaussian. 
In order to evaluate the units prior distributions, we used a Monte Carlo approximation, where the input vector $\bx$ was kept fixed, and the weights were sampled from independent standard Gaussian $n=10^5$ times. 
Figure~\ref{pic:nn_layers} illustrates the survival function of pre-nonlinearity hidden units $\bg^{(\ell)}$ for layers $\ell=1, 2, 3, 10$ and 100. 
This indicates that the prior tails of the units get heavier when they originate from deeper layers, in accordance with the main result of \citet{vladimirova2018bayesian}.

\subsection{Tail parameter estimation}

We conclude this section by suggesting a statistical estimation procedure for the tail parameter $\theta$. 
We adopt an approach that relies on Weibull random variables with tail parameter $\theta>0$ and scale parameter  $\lambda>0$, which are $\subW(\theta)$. 
The cumulative distribution function is $F(y) = 1-\edr^{-(y/\lambda)^{1/\theta}}$ for all $y \geq 0$, with corresponding quantile function $q(t) = \lambda \left(-\log(1-t)\right)^\theta$. Taking the logarithm, we obtain the following expression
\begin{align*}
    \log q(t) = \theta\log\log\frac{1}{1-t} + \log\lambda,
\end{align*}
showing an affine relationship involving the log quantiles and $\theta$ as slope parameter. This suggests a simple estimation procedure for $\theta$ by linear regression. More specifically, assume we have $n$ iid observations  $Y_1, \ldots, Y_n$, and denote the order statistics by  $Y_{i,n}$. 
Consider a fraction $k\leq n$ of the largest observations, that are the order statistics  $Y_{n-i+1,n}$ for $1\leq i\leq k$. 
These are approximating the quantiles of order $\frac{n-i}{n}=1-\frac{i}{n}$, respectively. As a consequence, we consider the slope coefficient in the regression of $\log Y_{n-i+1,n}$ against $\log\log(n/i)$, for $1\leq i\leq k$, as an estimate of $\theta$. This estimator was first introduced by \citet{gardes2008estimation} in the context of Weibull-tail distributions.
We visualize the alignment in a quantile-quantile plot in log-log scale in Figure~\ref{fig:BNN}.

We considered the following experiment in order to illustrate the estimation procedure. 
We have built neural networks of 10 hidden layers, with $H_\ell=H$ hidden units on each layers, and made $H$ vary in $\{1,3,10\}$. 
We used a fixed input $\bx$ of size $10^4$, which can be thought of as an image of dimension $100\times 100$. This input was sampled once for all with standard Gaussian entries. 
In order to obtain samples from the prior distribution of the neural network units, we have sampled the weights from independent centered Gaussians with variance set to 2, from which units were obtained by forward evaluation with the ReLU non-linearity. This process was iterated $n =10^5$ times and  the  number of largest observations selected is $k = 10^3$.
We show in Figure~\ref{fig:BNN} the $\theta$ estimates obtained by using pre-nonlinearities $\bg^{(\ell)}$ of layers $\ell$ in $\{1,3,10\}$ as data, for $H$ varying in $\{1,3,10\}$. 
We can see that the theoretical result from \cite{vladimirova2018bayesian} that states that $\theta = \ell/2$ is well in line with the estimates obtained with networks of width $H=1$. When the width increases, the estimates for $\theta$ tend to decrease, narrowing the gap to the lower bound of $1/2$. These results illustrate the large width results by \cite{matthewsgaussian,lee2018deep}, which state that in the large width regime, the unit distribution tends to a Gaussian, thus with a limiting $\theta$ parameter of $1/2$.
\begin{figure}
  \centering
  \includegraphics[width=.25\textwidth]{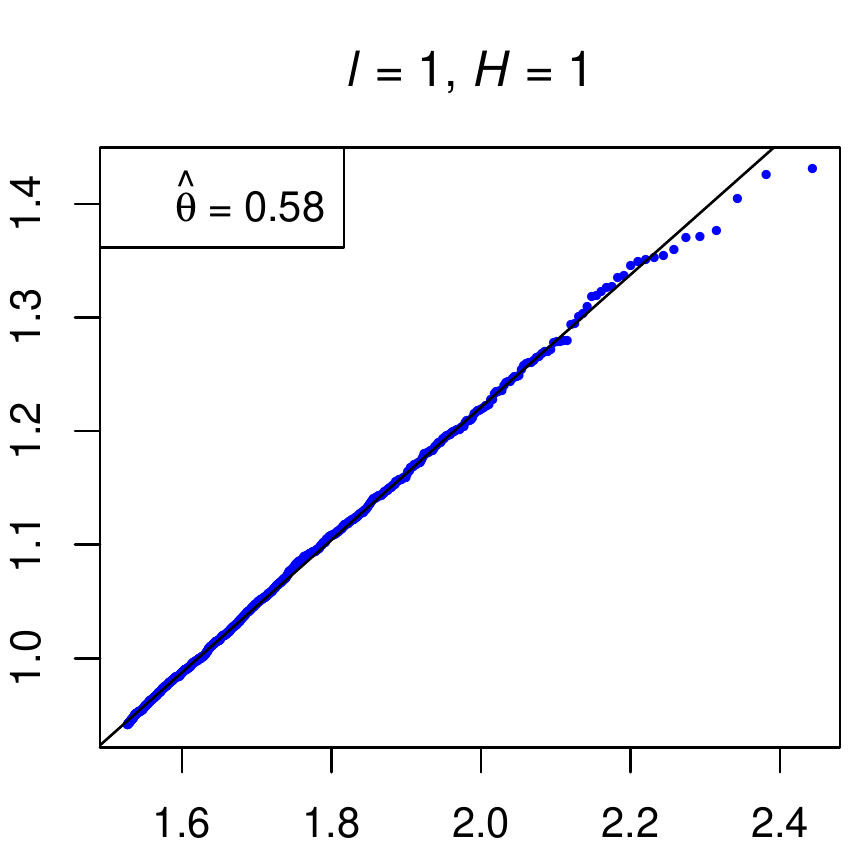}
  \includegraphics[width=.25\textwidth]{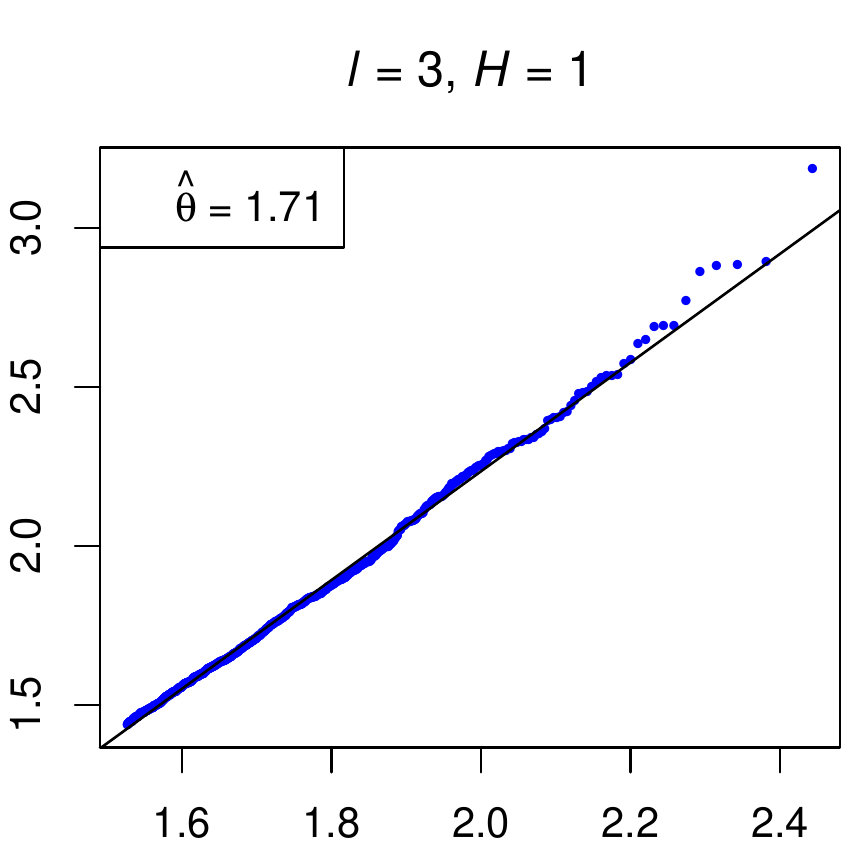}
  \includegraphics[width=.25\textwidth]{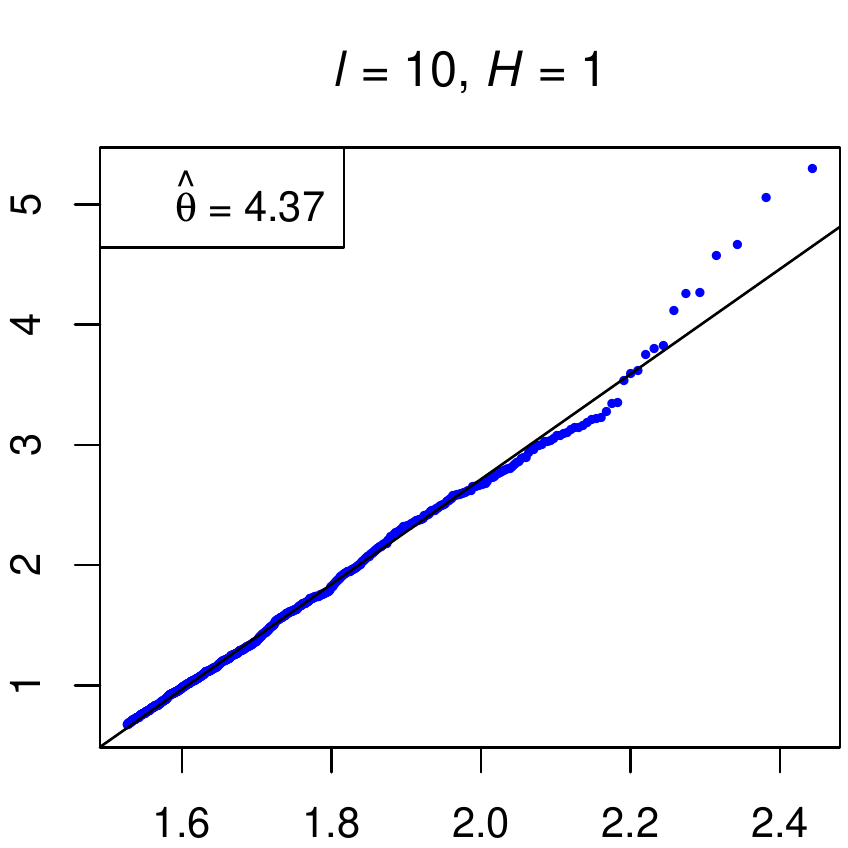}
  \includegraphics[width=.25\textwidth]{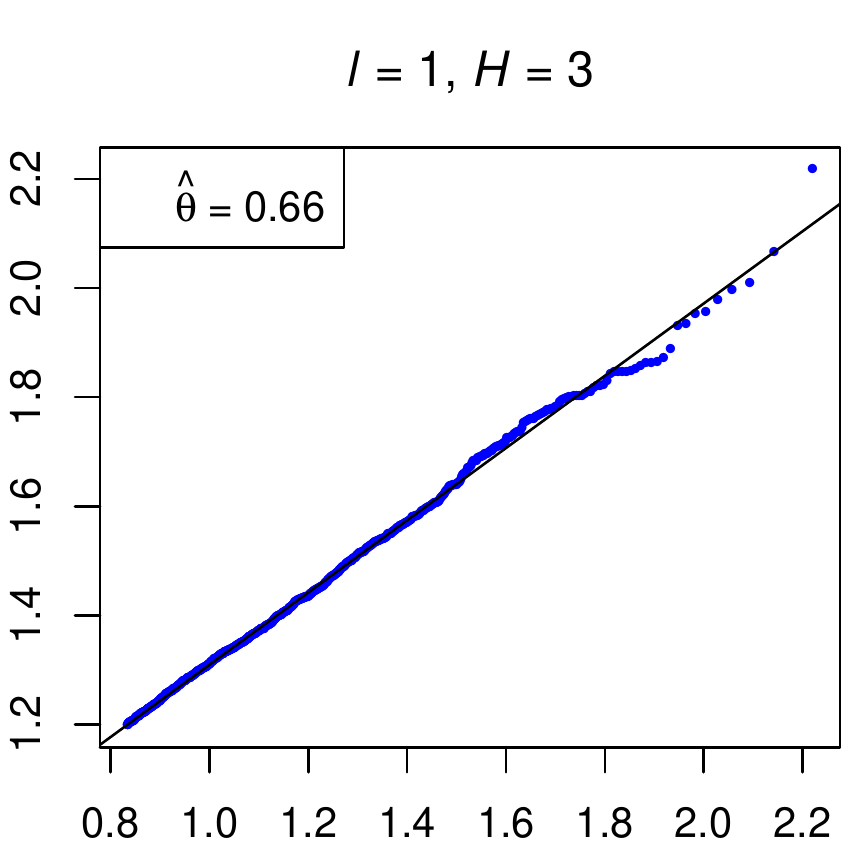}
  \includegraphics[width=.25\textwidth]{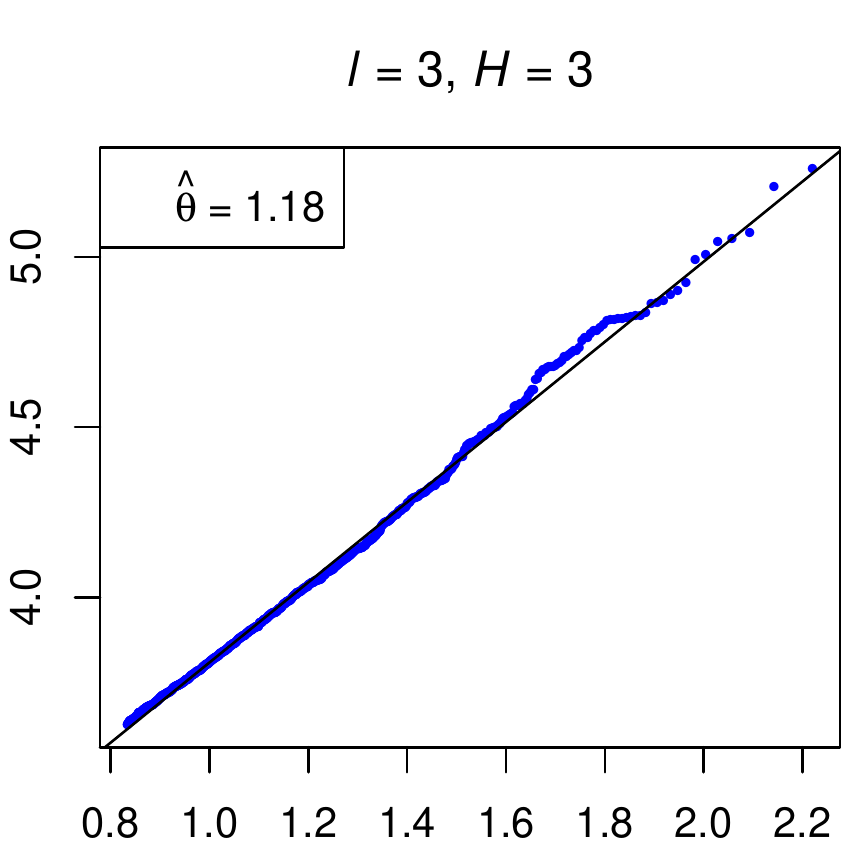}
  \includegraphics[width=.25\textwidth]{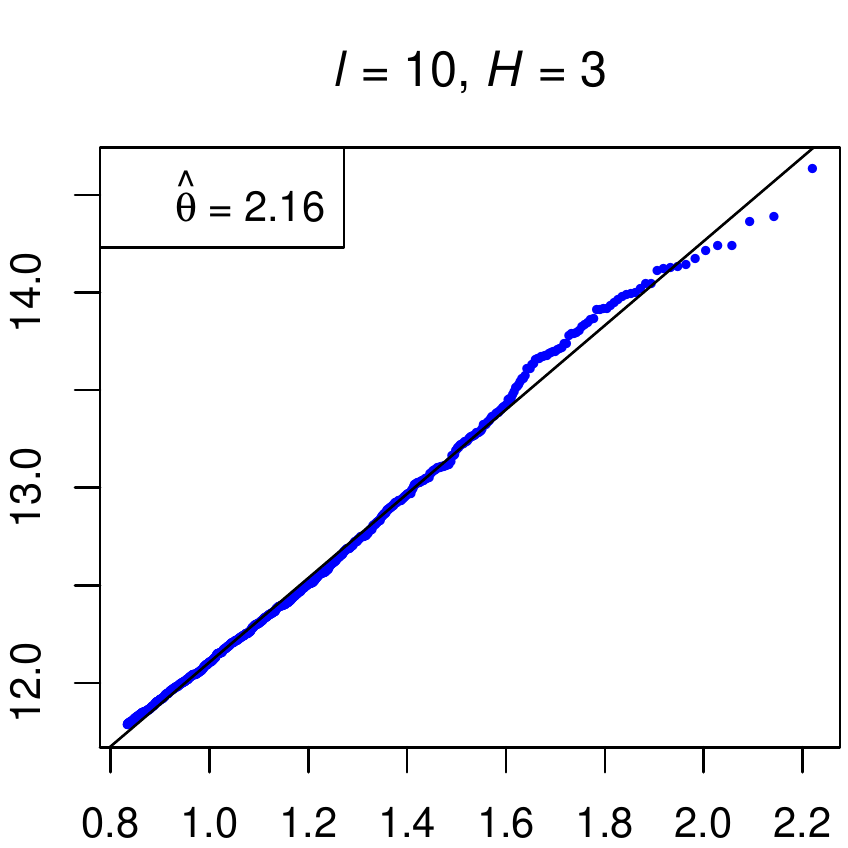}
  \includegraphics[width=.25\textwidth]{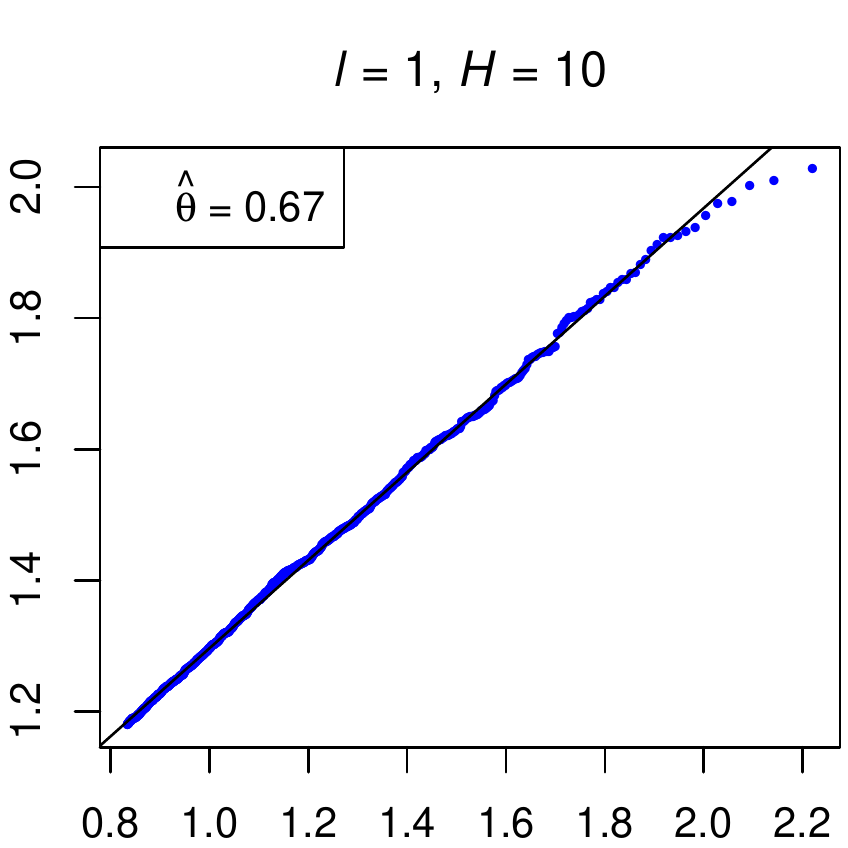}
  \includegraphics[width=.25\textwidth]{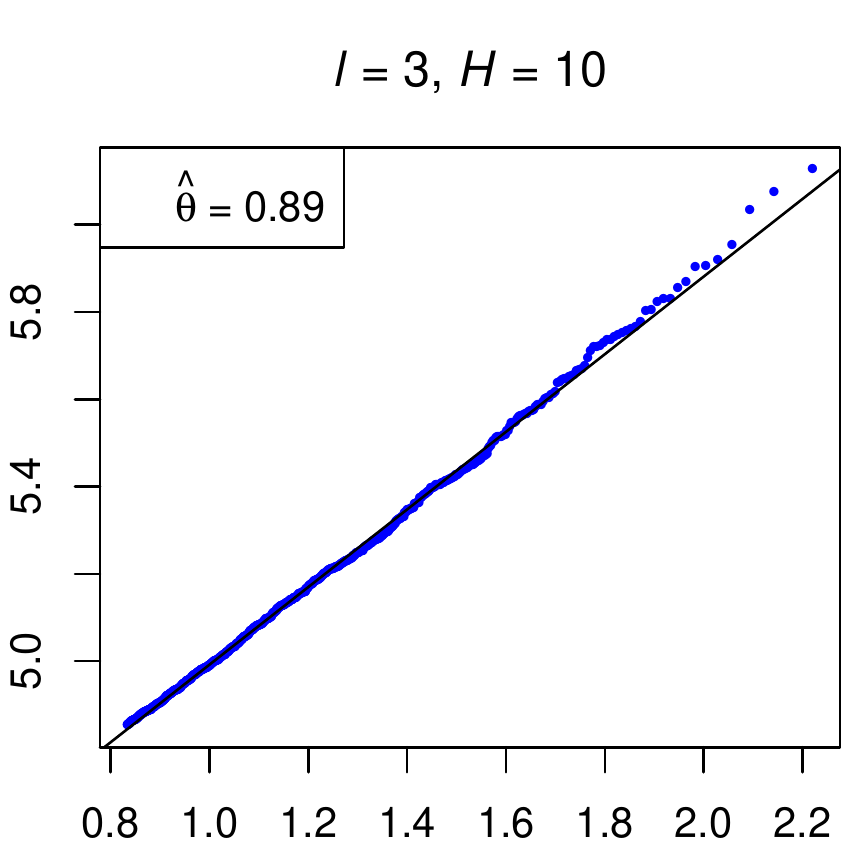}
  \includegraphics[width=.25\textwidth]{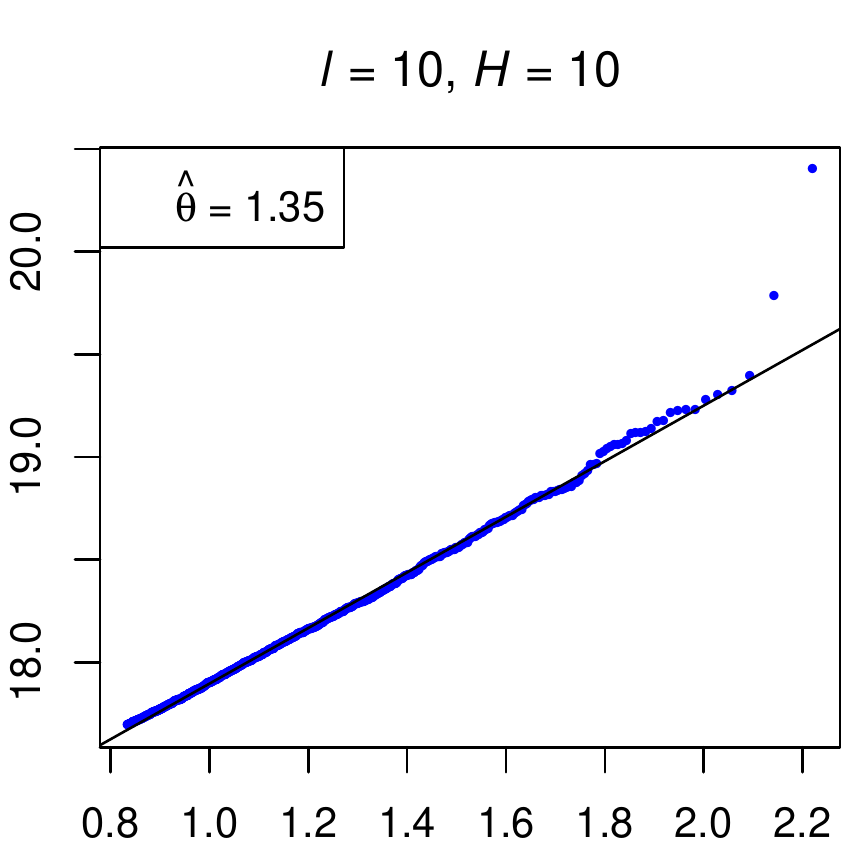}
  \caption{Illustration of the estimation procedure for the tail parameter $\theta$ for the prior distribution of Bayesian neural networks units. 
  Layers  $\ell$  of level 1, 3 and 10 on the left, middle and right panels, respectively. Width $H$ (for all considered layers  $\ell$ ) equal to 1, 3 and 10 on the top, middle and bottom rows, respectively. Note that the theoretical tail parameter value stated by \cite{vladimirova2018bayesian} is $\theta = \ell/2$, while the infinite width ($H\to\infty$) limiting value is $\theta = 1/2$.}
  \label{fig:BNN}
\end{figure}

\clearpage

\bibliographystyle{apalike}

\end{document}